\newtheorem*{sb157}{Scottish Book Problem 157}
\newtheorem*{theoremb}{Bruckner's Theorem 3.1}
\newtheorem*{theorembm}{Banach-Mazur Theorem}
\newtheorem*{theoremom}{O'Malley's Theorem 1$^*$}
\newtheorem*{theoremrolle}{Rolle's Theorem AP}
\newtheorem*{theoremmvt}{The Mean Value Theorem AP}
\newtheorem*{lemmaom}{O'Malley's Lemma}
\newtheorem*{theoremorn}{Ornstein's Theorem}
\newtheorem{theorem}{Theorem}
\newtheorem{thm}{Theorem}
\newtheorem{cor}[thm]{Corollary}
\newtheorem{lem}[thm]{Lemma}
\theoremstyle{definition}
\newtheorem{rem}[thm]{Remark}
\newtheorem{case}{Case}[thm]
\newcommand{\nn}{\mathbb{N}}
\newcommand{\ee}{\varepsilon}
\newcommand{\ds}{\displaystyle}
\def\R{\mathbb R}
\def\N{\mathbb N}
\def\c{C[0,1]}
\def\sb157{{\bf Scottish Book 157}}
\begin{document}

\title{On Scottish Book Problem 157}
\author{Kevin Beanland, Paul Humke, Trevor Richards}

\address{Department of Mathematics, Washington and Lee University, Lexington, VA 24450.}
\email{beanlandk@wlu.edu}
\email{humkep@gmail.com}
\email{richardst@wlu.edu}

\thanks{}

\thanks{2010 \textit{Mathematics Subject Classification}. Primary: }
\thanks{\textit{Key words}: Scottish book, approximately continuous}
\maketitle


\begin{abstract}
This paper describes our hunt for the solver of Problem 157 in the Scottish Book, a problem originally posed by A.~J. (Gus) Ward in 1937. We first make the observation that a theorem of Richard O'Malley from 1975 yields an immediate positive solution. A further look at O'Malley's references revealed a 1970 paper by Donald Ornstein that we now believe contains the first solution of {\em SB 157}. We isolate the common elements in the machinery used by both Ornstein and O'Malley and discuss several consequences. We also examine an example function given by Ornstein.  There are some difficulties with this function but we provide a fix, and show moreover that functions of that kind are typical in the sense of the Baire category theorem.

\end{abstract}

\section{The Solution in Brief}
On March 23rd, 1937 A.J. Ward asked the following problem which is recorded as Problem 157 in the famous {\em Scottish Book}.\footnote[1]{The prize for the solution to this problem is lunch at the ``The Dorothy" in Cambridge which the authors now offer to purchase for Richard O'Malley and Donald Ornstein; transportation costs are another matter!}

\begin{sb157}
Suppose $f$ is approximately continuous and at each point $x_0$ and the quantity $$\limsup_{h \to 0^+} \frac{f(x_0+h)-f(x)}{h},$$ neglecting any set of $h$ which have zero density at $h=0$,
is positive. Is $f(x)$ monotone increasing?
\end{sb157}

We became aware of this problem only recently when one of us\footnote[2]{Humke} was discussing the upcoming new edition of the {\it Scottish Book} with its editor, Dan Mauldin. Problem 157 was one of the problems marked as unresolved. Upon returning to campus, the authors decided to have a look at {\bf Scottish Book Problem 157} and a natural approach to its resolution soon brought them to Richard O'Malley's paper on approximate maxima, \cite{OM}. In that paper, O'Malley proves \cite[Theorem 1]{OM} from which the following result is an immediate corollary.

\begin{theoremom}\label{mom}
Let $f:[0,1]\to\mathbb{R}$ be approximately continuous but not strictly increasing. Then $f$ attains an approximate maximum at some point $x_0\in [0,1)$.
\end{theoremom}

A follow up conversation with O'Malley then led us to the following theorem by Donald Ornstein, \cite{O}.

\begin{theoremorn}
Let $f(x)$ be a real--valued function of a real variable satisfying the following:
\begin{enumerate}
\item[(a)] $f(x)$ is approximately continuous,

\item[(b)] For each $x_0$, let $E$ be the set of $x$, such that $f(x)-f(x_0)\ge 0$. Then
\[
\limsup_{h\to 0^+}\lambda\left( E\cap (x_0,x_0+h)\right)/h\not=0.
\]
\end{enumerate}

Then $f$ is monotone increasing and continuous.
\end{theoremorn}

This is clearly the solution to \sb157, and as far as we see is the first. There is a certain commonality to the machinery used in the proofs of O'Malley and Ornstein, and we'll try to isolate that common thread in the next section. Our proofs (largely reformulating those of O'Malley and Ornstein) will provide the slightly stronger result that the function under consideration is in fact strictly increasing (rather than just monotonically increasing).  We'll also list some elementary consequences and state all the relevant background.

In Section~\ref{sect: Ornstein's example.}, we'll examine an example given in~\cite{O}.  We will show that this example needs amending and supply that amendment. Finally, we will also show in that section that functions satisfying the properties of Ornstein's example are typical in the sense of the Baire category theorem.

But for those familiar with the definitions, here is the solution to~\sb157 which we have drawn from O'Malley's work and which is a trivial consequence of Theorem~1$^*$.

\begin{thm}
Suppose that $f:[0,1]\to\mathbb{R}$ is approximately continuous and that, for each $x_0\in[0,1)$ and each set of $h$ values $E\subset\mathbb{R}$ having zero density at $0$, the quantity $$\ds\limsup_{h\to0^+,h\notin E}\dfrac{f(x_0+h)-f(x_0)}{h}>0.$$  Then $f$ is strictly increasing.
\end{thm}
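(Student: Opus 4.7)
The plan is to prove the theorem by contradiction, with Theorem~1$^*$ doing essentially all of the work. Suppose $f$ is approximately continuous, satisfies the stated hypothesis, but fails to be strictly increasing. Then Theorem~1$^*$ produces a point $x_0\in[0,1)$ at which $f$ attains an approximate maximum; unpacking that definition, there is a measurable set $A\subset[0,1]$ having $x_0$ as a density point such that $f(x)\le f(x_0)$ for every $x\in A$.

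Next I would translate this geometric statement into the exceptional-set language used in the theorem. Setting
$$
E := \{\, h \in \mathbb{R}\setminus\{0\} : x_0 + h \notin A \,\},
$$
the hypothesis that $x_0$ is a density point of $A$ is exactly the statement that $E$ has density $0$ at $0$, so $E$ is admissible in the hypothesis. But for every $h>0$ with $h\notin E$ we have $x_0+h\in A$, whence $f(x_0+h)-f(x_0)\le 0$, and therefore
$$
\limsup_{h \to 0^+,\, h \notin E}\frac{f(x_0+h) - f(x_0)}{h} \;\le\; 0.
$$
This directly contradicts the hypothesis at $x_0$, forcing $f$ to have been strictly increasing after all.

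Because the serious analytic work is packaged inside Theorem~1$^*$, the only remaining step is matching definitions: translating ``$f$ attains an approximate maximum at $x_0$'' (a density-point statement about a set on which $f\le f(x_0)$) into the dual language of ``an exceptional set $E$ of zero density at $0$'' (its complement in a neighborhood of $0$). These two formulations are duals of one another, so the contrapositive of what Theorem~1$^*$ guarantees is precisely the statement to be proved; there is no genuine obstacle beyond this definitional translation, which is why the excerpt already describes the result as a trivial consequence of Theorem~1$^*$.
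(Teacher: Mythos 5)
Your approach is the same as the paper's: both proofs let O'Malley's Theorem~1$^*$ do all the analytic work, and what remains is the observation that an approximate maximum at $x_0$ supplies an admissible exceptional set $E$ of zero density at $0$ along which the one-sided difference quotient is nonpositive, contradicting the hypothesis. The paper's version is phrased pointwise --- it fixes $x_1<x_2$ with $f(x_1)\ge f(x_2)$, applies Theorem~1$^*$ to $f$ restricted to $[x_1,x_2]$, and concludes $f(x_1)<f(x_2)$ --- while you apply the theorem once on all of $[0,1]$. The content is identical.

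One small thing to tighten: your $E=\{h\neq 0: x_0+h\notin A\}$ automatically contains every $h$ with $x_0+h\notin[0,1]$, because $A\subset[0,1]$. For $x_0\in(0,1)$ this is harmless (for $r<\min\{x_0,1-x_0\}$ the window $(-r,r)$ never meets that junk, so the symmetric density of $E$ at $0$ is indeed $0$). But Theorem~1$^*$ also allows $x_0=0$, and then $(-r,0)\subset E$ for every $r$, so $\overline\Delta(E,0)\ge 1/2$ and $E$ is not an admissible exceptional set in the sense of the hypothesis. The repair is trivial: take $E=\{h>0: f(x_0+h)>f(x_0)\}$ instead. This set lies in $(0,\infty)$, inherits zero symmetric density at $0$ from $\Delta(H_{f(x_0)},x_0)=0$, and negative $h$ play no role in the $\limsup$ in any case.
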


\begin{proof}
Given any points $0\leq x_1<x_2\leq1$, if $f(x_1)\geq f(x_2)$, then applying O'Malley's~Theorem to the function $f$ restricted to $[x_1,x_2]$, we obtain that $f$ attains an approximate maximum (relative to $[x_1,x_2]$) at some point $x_0\in~[x_1,x_2)$.  That is, there is some set of $h$ values $E_0\subset\mathbb{R}$ having zero density at $0$ such that, on some neighborhood of $x_0$ in $[x_1,x_2]\setminus E_0$, $f$ attains its absolute maximum at $x_0$.

We conclude therefore that $$\ds\limsup_{h\to0^+,h\notin E_0}\dfrac{f(x_0+h)-f(x_0)}{h}\leq0.$$  This contradicts the assumption made on $f$.  We conclude that $f(x_1)<f(x_2)$, and that therefore $f$ is strictly increasing on $[0,1]$.

\end{proof}

\section{The Rest of the Story}
All sets and functions considered here will be assumed to be measurable with respect to $\lambda$, Lebesgue measure on $\mathbb{R}$. Suppose $E\subset \mathbb{R}$ and $I\subset \mathbb{R}$ is an interval. Then the density of $E$ in $I$ is defined as $\Delta(E,I)=\lambda(E\cap I)/\lambda(I).$ Now, if $x\in\mathbb{R}$, then the upper density of $E$ at $x$ is defined as  $\overline\Delta(E,x)=\limsup_{r \to 0} \Delta(E,(x-r,x+r))$. The lower density at $x$, $\underline\Delta(E,x)$ is defined similarly where $\liminf$ replaces $\limsup$ and if these two are equal at $x$, their common value is called the density of $E$ at $x$ and is denoted $\Delta(E,x)$.

Now suppose a function $f:\mathbb{R}\to\mathbb{R}$ is given. Then $f$ is approximately continuous at $x_0$ if there is a set $E$ 
with zero density at $x_0$, so that the limit of $f$ as $x\to x_0$ on $\mathbb{R}\setminus E$ is $f(x_0)$. A function $f$ has an approximate maximum at $x_0$ if $\Delta(H_{f(x_0)},x_0)=0$ where we define $H_y\equiv H_y(f)=\{x:f(x)>y\}$. 

For the purpose of exposition, we isolate the following remarks concerning density and approximately continuous functions. 

\begin{rem}
Although the definitions above of upper and lower density at a point $x$ are given in terms of intervals which are symmetric around $x$ only, it is easy to show that if $E$ and $F$ are sets having density zero and one at $x$ respectively, then for any $\epsilon>0$ there is a $\delta>0$ small enough so that if $I$ is any interval containing $x$ in its closure with $\lambda(I)<\delta$, then $\Delta(E,I)<\epsilon$ and $\Delta(F,I)>1-\epsilon$.
\label{rem: Non-symmetric density.}
\end{rem}

\begin{rem}
Suppose $y \in \mathbb{R}$, $f$ is approximately continuous at $x$ and $\{I_n\}$ is a nested sequence of closed intervals so that that $\{x\}= \bigcap_n I_n$ and $\Delta(H_y,I_n)>\eta>0$ for all $n \in \nn$. Then $\overline{\Delta}(H_y,x)>\eta/2$.
\label{ACcon}
\end{rem}

\begin{rem}
If $f$ is approximately continuous at $x$, $z<y$ and $\overline{\Delta}(H_y,x)>0$ then $f(x)\geqslant y$ and $\Delta(H_z,x)=1$.
\label{ACgreater}
\end{rem}

An important first step is to see that there is an approximately continuous function with no relative extrema. This is, perhaps, not particularly surprising, but the fact that this function can be a derivative is a good introduction into the real nature of derivatives. This example can be found in Andy Bruckner's classic introduction, \cite{AMB}.

\begin{theoremb}
There is a bounded approximately continuous derivative which achieves no local maximum 
and no local minimum.
\end{theoremb}

To visit O'Malley's machinery, consider a measurable set $H$ and an interval $I$ so that $\lambda(H\cap I)>0$ and let $\ee>0$ be given. We define
$$\mathcal{J}(H,I,\ee)=\{J: J \subset I\text{ is an open interval with } \Delta(H,J)>\ee\}$$ and we let $G_\ee(H,I)=\bigcup\mathcal{J}(H,I,\ee) $.
O'Malley proves the following lemma, \cite[Lemma 1]{OM} concerning components of 
$G_\ee(H,I)$. 

\begin{lem}\label{L-one}
Suppose $H\subset [0,1]$ is measurable and fix $(a_0,b_0) \subset (0,1)$ with $\lambda(H \cap (a_0,b_0))>0$ and let $\ee>0$. Let  $(a_1,b_1)$ be a component of $G_\ee(H,(a_0,b_0))$. Then
\begin{enumerate}
\item $\Delta(H,(a_1,b_1))\geqslant \ee/2$, and
\item If $I \subset (a_0,b_0)$ is an open interval with ${I}\cap\{a_1,b_1\}\not=\emptyset$ then  $\Delta(H,I)\leqslant \ee$.
\end{enumerate}
In particular, $\lambda(G_\ee(H,(a_0,b_0)))\leqslant 2 \lambda(H\cap (a_0,b_0))/\ee$. 
\end{lem}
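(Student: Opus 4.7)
The plan is to prove $(2)$ first, then $(1)$, and finally the ``in particular'' inequality.

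For $(2)$, the observation is that $a_1,b_1 \notin G_\varepsilon(H,(a_0,b_0))$: if, say, $a_1$ belonged to some component $(p,q)$ of the open set $G_\varepsilon$, then $p < a_1 < q$, but $(p,q)$ would share the point $a_1+\delta$ with the neighboring component $(a_1,b_1)$ for all small $\delta>0$, forcing $(p,q)=(a_1,b_1)$ and the absurdity $p=a_1>p$. Now if $I\subset(a_0,b_0)$ is any open interval with $\Delta(H,I)>\varepsilon$, then $I\in\mathcal{J}(H,(a_0,b_0),\varepsilon)$, so $I\subset G_\varepsilon$; therefore $I\cap\{a_1,b_1\}=\emptyset$, which is the contrapositive of $(2)$.

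For $(1)$, I plan a one-dimensional covering argument. For each $x\in(a_1,b_1)\subset G_\varepsilon$, choose an open interval $J_x\ni x$ with $J_x\subset(a_0,b_0)$ and $\Delta(H,J_x)>\varepsilon$; since $J_x$ is connected, contained in $G_\varepsilon$, and meets the component $(a_1,b_1)$, we get $J_x\subset(a_1,b_1)$. Fix a compact $K\subset(a_1,b_1)$, extract a finite subcover of $K$ from $\{J_x\}$, and pare it down to a minimal finite subcover $J_1,\ldots,J_n$ indexed by increasing left endpoint. The key combinatorial claim is that $J_i\cap J_{i+2}=\emptyset$ for all $i$. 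Suppose not. Then either $J_{i+1}\cap J_{i+2}=\emptyset$, in which case $r_{i+1}\le\ell_{i+2}<r_i$ together with $\ell_{i+1}\ge\ell_i$ force $J_{i+1}\subset J_i$, or else $J_i,J_{i+1},J_{i+2}$ pairwise intersect, and then a brief case analysis on which of $r_i,r_{i+1},r_{i+2}$ is smallest exhibits one of the three as a subset of the union of the other two. Either outcome contradicts minimality. A simple induction using $\ell_{i+2}\ge r_i$ then shows that the odd-indexed and even-indexed subfamilies are each pairwise disjoint. Since $\sum_i\lambda(J_i)\ge\lambda(\bigcup_iJ_i)\ge\lambda(K)$, one of these subfamilies, call it $\mathcal{D}$, has total length at least $\lambda(K)/2$. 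Its members are disjoint, contained in $(a_1,b_1)$, and each has $H$-density strictly above $\varepsilon$, so
$$\lambda\bigl(H\cap(a_1,b_1)\bigr)\;\ge\;\sum_{J\in\mathcal{D}}\lambda(H\cap J)\;>\;\varepsilon\sum_{J\in\mathcal{D}}\lambda(J)\;\ge\;\tfrac{\varepsilon}{2}\,\lambda(K).$$
Exhausting $(a_1,b_1)$ by such compact $K$ delivers $\lambda(H\cap(a_1,b_1))\ge(\varepsilon/2)(b_1-a_1)$, which is $(1)$.

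The ``in particular'' inequality is now immediate: writing $G_\varepsilon(H,(a_0,b_0))=\bigsqcup_i(a_i,b_i)$ as its countable disjoint union of components and summing $(1)$ componentwise gives
$$\lambda\bigl(H\cap(a_0,b_0)\bigr)\;\ge\;\sum_i\lambda\bigl(H\cap(a_i,b_i)\bigr)\;\ge\;\tfrac{\varepsilon}{2}\sum_i(b_i-a_i)\;=\;\tfrac{\varepsilon}{2}\,\lambda\bigl(G_\varepsilon(H,(a_0,b_0))\bigr),$$
which rearranges to the stated bound. I expect the main obstacle to be the combinatorial step inside $(1)$—showing that in a minimal open-interval cover of a compact subset of $\mathbb{R}$, intervals two apart in the left-endpoint order are disjoint. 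The argument is elementary but the case analysis must be handled carefully, distinguishing redundancy of a middle interval through a single neighbor from redundancy through the union of two others.
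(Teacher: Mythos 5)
Your proof is correct, and it supplies a full justification for item~(1) where the paper merely asserts it (the paper's one-sentence ``each component is comprised of intervals with density at least $\varepsilon$'' glosses over exactly the covering argument you carry out, which is where the factor of $1/2$ actually comes from). Items~(2) and the ``in particular'' conclusion are handled the same way the paper does. One small wrinkle: in the pairwise-intersecting subcase of your minimal-cover argument, the split ``on which of $r_i,r_{i+1},r_{i+2}$ is smallest'' does not quite single out the redundant interval --- for instance $J_i=(0,3)$, $J_{i+1}=(1,5)$, $J_{i+2}=(2,4)$ has $r_i$ smallest, yet it is $J_{i+2}$, not $J_i$, that is swallowed by the other two. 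The clean dichotomy is on whether the \emph{middle} interval $J_{i+1}$ has the strictly largest right endpoint: if not, then $\ell_i\le\ell_{i+1}$ and $r_{i+1}\le\max(r_i,r_{i+2})$ put $J_{i+1}$ inside $J_i\cup J_{i+2}=(\ell_i,\max(r_i,r_{i+2}))$; if so, then $\ell_{i+1}\le\ell_{i+2}$ and $r_{i+2}<r_{i+1}$ give $J_{i+2}\subset J_{i+1}$. Either way minimality is contradicted, so your conclusion $J_i\cap J_{i+2}=\emptyset$ stands, and the rest of the argument (odd/even decomposition, exhaustion by compacts, componentwise summation) is airtight.
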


The first item holds since each component is comprised of intervals with density at least $\ee$. The second part of the lemma follows from the definition of $G_\ee(H,(a_0,b_0))$. If $I\subset (a_0,b_0)$ is an interval that either contains or overlaps a component of $G_\ee(H,(a_0,b_0))$ then, by definition, $I \not\in J(H,(a_0,b_0),\ee)$ and so $\Delta(H,I)\leqslant \ee$.

The next lemma has been extracted from the proof of Theorem~1$^*$ in~\cite{OM} and is the main workhorse of the proof our main result. We postpone the proof to Section~\ref{sect: Proof of O'Malley's Lemma.}.

\begin{lemmaom}
Suppose $f:[a,b] \to \mathbb{R}$ is approximately continuous satisfying $\lambda(f^{-1}(y))=0$ for all $y\in\mathbb{R}$.  Let $[a_0,b_0] \subset [a,b]$ be such that $s_0:=\sup f[a_0,b_0]>\max\{f(a_0),f(b_0)\}$. Then for each $\ee>0$ and $y_0 <s_0$ there is a $y_1\in(y_0,s_0)$ and a component $(a_1,b_1)$ of $G_\ee(H_{y_1},(a_0,b_0))$ satisfying:

\begin{enumerate}
\item $[a_1,b_1] \subset (a_0,b_0)$
\item $(b_1-a_1)<1/2(b_0-a_0)$
\item $\max\{f(a_0),f(b_0),y_0\} < y_1$
\item $\max\{f(a_1),f(b_1)\}\leqslant y_1$
\item $\Delta(H_{y_0},(a_1,b_1))>1/2$
\item If $I\subset(a_0,b_0)$ is an open interval with ${I}\cap\{a_1,b_1\}\neq\emptyset$, then $\Delta(H_{y_1},I)\leqslant\ee$.

\end{enumerate}
\end{lemmaom}

Using the above we give the proof of O'Malley's Theorem \ref{mom}$^*$.

\begin{proof}[Proof of O'Malley's Theorem \ref{mom}$^*$]
We first note that if the condition $\lambda(f^{-1}(y))=0$ found in the statement of O'Malley's Lemma fails for some $y\in\mathbb{R}$, then the set $f^{-1}(y)$ will have a density point $x_0\in[0,1)$.  It is easy to see that $f$ will achieve an approximate maximum at $x_0$, and we are done.  Therefore we assume that $\lambda(f^{-1}(y))=0$ for all $y\in\mathbb{R}$.  Note also that as $f$ is not strictly increasing,  there is a $b\in(0,1]$ so that $f(b)\not=\sup f[0,b]$. Therefore without loss of generality we assume that $f(1) \not= \sup f[0,1]$. If $f(0)=\sup f[0,1]$ we are done, setting $x_0=0$.  Therefore we assume further that $\sup f[0,1]>\max\{f(0),f(1)\}$.  Define $[a_0,b_0]=[0,1]$, $s_0=\sup f[0,1]$, and let $y_0<s_0$ be chosen arbitrarily.  We apply O'Malley's Lemma iteratively with $\epsilon=1/k$ at the $k^{\text{th}}$ stage to obtain a strictly increasing sequence of real numbers $\{y_k\}$ and a strictly nested sequence of intervals $\{(a_k,b_k)\}$ such that for each $k\in\mathbb{N}$, $(a_{k+1},b_{k+1})$ is a component of $G_\ee(H_{y_{k+1}},(a_k,b_k))$ and the following items hold.

\begin{enumerate}
\item[(i)] $[a_{k+1},b_{k+1}] \subset (a_k,b_k)$
\item[(ii)] $(b_{k+1}-a_{k+1})<1/2(b_k-a_k)$
\item[(iii)] $\max\{f(a_k),f(b_k),y_k\} < y_{k+1}$
\item[(iv)] $\max\{f(a_{k+1}),f(b_{k+1})\}\leqslant y_{k+1}$
\item[(v)] $\Delta(H_{y_k},(a_{k+1},b_{k+1}))>1/2$
\item[(vi)] If $I\subset(a_k,b_k)$ is an open interval with ${I}\cap\{a_{k+1},b_{k+1}\}\neq\emptyset$, then $\Delta(H_{y_{k+1}},I)\leqslant1/k$.
\end{enumerate}

In order to justify this recursive construction (i.e. to ensure that at the $k^{\text{th}}$ stage the function $f$ and the interval $(a_k,b_k)$ satisfies the assumption made on $f$ and $(a_0,b_0)$ in the statement of the O'Malley's Lemma), observe that since $(a_{k+1},b_{k+1})$ is a component of $G_\ee(H_{y_{k+1}},(a_k,b_k))$, $H_{y_{k+1}}\cap(a_k,b_k)\neq\emptyset$.  Therefore setting $s_k=\sup f[a_k,b_k]$, we have $s_k>y_{k+1}>\max\{f(a_k),f(b_k)\}$.

Items~(i) and~(ii) above yield that the intersection of the intervals so obtained consists of a single point, $\{x_0\} = \bigcap_{k=0}^\infty (a_k,b_k)$.  We have two claims: $f(x_0) \geqslant y_n$ for each $n \in \mathbb{N}$ and $\Delta(H_{f(x_0)},x_0)=0$. This final claim yields that $f$ has an approximate maximum at $x_0\in[0,1)$, as desired. 

Fix some positive integer $n$.  Then for each positive integer $k$, $y_n < y_{n+k}$. Using this inequality and item (v) above
$$\Delta(H_{y_n},(a_{n+k+1},b_{n+k+1}))\geqslant \Delta(H_{y_{n+k}},(a_{n+k+1},b_{n+k+1}))>1/2.$$
\noindent Using Remark~\ref{ACcon}, $\overline{\Delta}(H_{y_n},x_0)>1/4$ and so, by Remark~\ref{ACgreater}, $f(x_0) \geqslant y_n$.

It remains to observe that $\Delta(H_{f(x_0)},x_0)=0$. Suppose by way of contradiction that $\overline{\Delta}(H_{f(x_0)},x_0)=\eta>0$.  Choose some $m\in\mathbb{N}$ with $1/m<\eta/2$.  Since $\overline{\Delta}(H_{f(x_0)},x_0)=\eta$, we can find some $r>0$ small enough so that the interval $I=(x_0-r,x_0+r)$ satisfies
$$I\subset (a_m,b_m),\text{ and }\Delta(H_{f(x_0)},I)>\eta/2.$$
Choose $k>0$ to be the smallest positive integer such that $I\not\subset(a_{m+k+1},b_{m+k+1})$.  Then $$I\subset(a_{m+k},b_{m+k}),\text{ and }I\cap\{a_{m+k+1},b_{m+k+1}\}\neq\emptyset,$$ so we obtain the contradiction
$$\dfrac{\eta}{2}<\Delta(H_{f(x_0)},I)\leqslant\Delta(H_{y_{m+k}},I)<\dfrac{1}{m+k}\leq\dfrac{1}{m}<\dfrac{\eta}{2},$$
where the second inequality follows from the fact that $f(x_0)>y_{m+k}$, and the third inequality above follows from item~(vi).  This contradicts the choice of $r$.
\end{proof}

Several old standards can be immediately generalized to approximately continuous functions
using the theorem above. For example.

\begin{theoremrolle}
Let $f:[0,1]\to \mathbb{R}$ be approximately continuous and approximately  differentiable on $(0,1)$ 
with $f(0)=f(1)$. Then there is a point $x_0\in (0,1)$ at which $f^\prime_{ap}(x_0)=0$.

\end{theoremrolle}

And hence, also it's immediate consequence.
\begin{theoremmvt}
Let $f:[a,b]\to\mathbb{R}$ be approximately continuous and approximately differentiable on $(a,b)$. 
Then there is a point $x_0\in (0,1)$ at which $f^\prime_{ap}(x_0)=\frac{f(b)-f(a)}{b-a}$.
\end{theoremmvt}

Mean Value Theorems for the approximate derivative are well known and in much greater generality; in fact, O'Malley showed in~\cite{OM2} that $x_0$ can be chosen so that
$f^\prime(x_0)=\frac{f(b)-f(a)}{b-a}$.

\section{Proof of O'Malley's Lemma}\label{sect: Proof of O'Malley's Lemma.}

We need the following easy remark.

\begin{rem}\label{remark: Mini.}
Suppose $f:[a,b]\to\mathbb{R}$ is approximately continuous, so that for all $y\in\mathbb{R}$, $\lambda(f^{-1}(y))=0$. Then, setting $s=\sup(f[a,b])$, $\displaystyle\lim_{y\to s}\lambda(H_y)=0$.  Therefore by Lemma~\ref{L-one}, for such a function $f$ and any $\ee>0$, $\lambda(G_\ee(H_y,(a,b)))\to0$ as $y\to s$.
\end{rem}

We restate O'Malley's Lemma for reference.

\begin{lemmaom}
Suppose $f:[a,b] \to \mathbb{R}$ is approximately continuous satisfying $\lambda(f^{-1}(y))=0$ for all $y\in\mathbb{R}$.  Let $[a_0,b_0] \subset [a,b]$ be such that $s_0:=\sup f[a_0,b_0]>\max\{f(a_0),f(b_0)\}$. Then for each $\ee>0$ and $y_0 <s_0$ there is a $y_1\in(y_0,s_0)$ and a component $(a_1,b_1)$ of $G_\ee(H_{y_1},(a_0,b_0))$ satisfying:

\begin{enumerate}
\item\label{item: Subset.} $[a_1,b_1] \subset (a_0,b_0)$
\item\label{item: Shrinking intervals.} $(b_1-a_1)<1/2(b_0-a_0)$
\item\label{item: r_1 greater.} $\max\{f(a_0),f(b_0),y_0\} < y_1$
\item\label{item: r_1 greater equal.} $\max\{f(a_1),f(b_1)\}\leqslant y_1$
\item\label{item: High density interval.} $\Delta(H_{y_0},(a_1,b_1))>1/2$
\item\label{item: Low density endpoints.} If $I\subset(a_0,b_0)$ is an open interval with ${I}\cap\{a_1,b_1\}\neq\emptyset$, then $\Delta(H_{y_1},I)\leqslant\ee$.

\end{enumerate}
\end{lemmaom}

The reader will note a marked similarity between the proof given of O'Malley's Theorem~1$^*$ and the following proof of O'Malley's~Lemma, both depending on a recursive construction of an increasing sequence of real numbers and a nested sequence of intervals.  The thing to notice is that, in the following proof of O'Malley's~Lemma, the two sequences are chosen so that the corresponding members satisfy the first four items in the statement of O'Malley's~Lemma, and then one real number and interval is chosen which also satisfies items~(5) and~(6) as well.  By contrast, each of the real numbers and corresponding intervals found in the proof of O'Malley's~Theorem~1$^*$ satisfy all six items from the statement of O'Malley's~Lemma.  It was then shown that the intersection of all the intervals consists of a single point, which turns out to be the point we were looking for (at which the function achieves a local approximate maximum).

\begin{proof}
Fix $f,(a,b),(a_0,b_0),s_0,y_0$ and $\ee>0$ as in the hypotheses. Find $\alpha>0$ so that $\max\{f(a_0),f(b_0),y_0\} < \alpha < s_0$. Since $f$ is approximately continuous there is a $\delta>0$ so that if $I$ is an interval in $[a,b]$ with either $a_0$ or $b_0$ as an endpoint, and with $\lambda(I)<\delta$, 
\begin{equation}
\Delta(H_\alpha,I)<\ee/2.
\label{smallish}
\end{equation}

In order to choose our number $y_1$ and interval $(a_1,b_1)$, we will first construct a strictly increasing sequence of real numbers $\{r_k\}$ and nested intervals $\{(c_k,d_k)\}$.  To initialize this construction, we set $r_0=y_0$ and $(c_0,d_0)=(a_0,b_0)$.

By Remark~\ref{remark: Mini.}, we may find some $r_1$ with $\alpha < r_1 < s_0$ so that $\lambda(G_\ee(H_{r_1},(a_0,b_0)))< \min\{\delta,1/2(d_0-c_0)\}$. Let $(c_1,d_1)$ be any component of $G_\ee(H_{r_1},(a_0,b_0))$. Assuming that $(c_1,d_1)$ shares an endpoint with $(c_0,d_0)$ yields the contradiction
$$\ee/2\leqslant \Delta(H_{r_1},(c_1,d_1)) \leqslant \Delta(H_{\alpha},(c_1,d_1)) <\ee/2.$$
The first inequality comes from Lemma \ref{L-one}(1), the second from $r_1>\alpha$, and the third from (\ref{smallish}). Thus $[c_1,d_1]\subset (c_0,d_0)$.

So far (1), (2) and (3) are satisfied for $(c_1,d_1)$. To see (4), suppose, by way of contradiction, that $r_1 <\max\{f(c_1),f(d_1)\}$, and without loss of generality that $r_1< f(c_1)$. The approximate continuity of $f$ at $c_1$ then yields that $\Delta(H_{r_1},c_1)=1$. This, in turn, implies that there is an open interval $I$ contained in $(c_0,d_0)$ and containing $c_1$ with $\Delta(H_{r_1},I) > \ee$. This  contradicts Lemma~\ref{L-one}(2).

At this point we have that the interval $(c_1,d_1)$ and the value $r_1$ satisfy items~(\ref{item: Subset.})-(\ref{item: r_1 greater equal.}) in the statement of the lemma (replacing $(a_1,b_1)$ with $(c_1,d_1)$ and $y_1$ with $r_1$).  The next step is to iterate this construction, obtaining a strictly nested sequence of intervals $(c_k,d_k)$ and a strictly increasing sequence of numbers $r_1<r_2<\cdots<s_0$ such that, for each $k\geqslant 1$, $(c_{k+1},d_{k+1})$ is a component of $G_\ee(H_{r_{k+1}},(c_k,d_k))$ satisfying

\begin{enumerate}
\item[(i)] $[c_{k+1},d_{k+1}]\subset(c_k,d_k)$
\item[(ii)] $(d_{k+1}-c_{k+1})<1/2(d_k-c_k)$.
\item[(iii)] $\max\{f(c_k),f(d_k),r_k\}< r_{k+1}$
\item[(iv)] $\max\{f(c_{k+1}),f(d_{k+1})\}\leqslant r_{k+1}$
\end{enumerate}

In order to justify this recursive construction (i.e. to ensure that at the $k^{\text{th}}$ stage the function $f$ and the interval $(c_k,d_k)$ satisfies the assumption made on $f$ and $(a_0,b_0)$ in the statement of the lemma), observe that since $(c_{k+1},d_{k+1})$ is a component of $G_\ee(H_{r_{k+1}},(c_k,d_k))$, $H_{r_{k+1}}\cap(c_k,d_k)\neq\emptyset$.  Therefore setting $s_k=\sup f[c_k,d_k]$, we have $s_k>r_{k+1}>\max\{f(c_k),f(d_k)\}$.

Items~(i) and~(iv) above guarantee that the intersection of the intervals $[c_k,d_k]$ consists of a single point, $\{x_0\} = \bigcap_k[c_k,d_k]$. 

For each $k\geqslant1$, $r_1< r_{k+1}$, so $\Delta(H_{r_1},(c_k,d_k)) \geqslant \Delta(H_{r_{k+1}},(c_k,d_k)) \geqslant \ee/2$ (by Lemma~\ref{L-one}(1)). Therefore using Remark~\ref{ACcon} we obtain $\overline{\Delta}(H_{r_1},x_0) \geqslant \ee/4$. Remark~\ref{ACgreater} now yields that $f(x_0)\geqslant r_1$ and since $r_1>y_0$, $\Delta(H_{y_0},x_0)=1$. Therefore using Remark~\ref{rem: Non-symmetric density.}, we can find some $n$ so that $\Delta(H_{y_0},(c_n,d_n))> 1/2$. Define $y_1=r_n$ and $(a_1,b_1)=(c_n,d_n)$. 

For this choice of $(a_1,b_1)$ and $y_1$ it is easy to verify items~(\ref{item: Subset.})--(\ref{item: r_1 greater equal.}) from the statement of the lemma.  Item~(\ref{item: High density interval.}) follows immediately from the choice of $n$.

It remains to establish item~(\ref{item: Low density endpoints.}). To this end, let $(c,d)\subset(a_0,b_0)$ be any open interval with $(c,d)\cap\{a_1,b_1\}\neq\emptyset$. Now, $(c,d)\subset(c_0,d_0)=(a_0,b_0)$, and $(c,d)\not\subset(c_n,d_n)=(a_1,b_1)$, so we may choose the least $m\in\{1,\ldots,n\}$ such that $(c,d)\not\subset(c_m,d_m)$ (and thus $(c,d)\subset(c_{m-1},d_{m-1})$).  Moreover, $(c,d)$ intersects $(a_1,b_1)$, which is in turn contained in $(c_m,d_m)$, so we have that $(c,d)\cap\{c_m,d_m\}\neq\emptyset$.  Therefore, since $(c_m,d_m)$ is a component of $G_\ee(H_{r_m},(c_{m-1},d_{m-1}))$, it follows from Lemma~\ref{L-one}(2) that $\Delta(H_{r_m},(c,d))\leq\ee$.  Since $y_1\geqslant r_m$, it follows that $\Delta(H_{y_1},(c,d))\leqslant\ee$ as required.

\end{proof}

\section{An example}\label{sect: Ornstein's example.}

In order to show that item (b) in Ornstein's Theorem may not be significantly weakened without losing the result of the theorem, in~\cite{O} a continuous function $f:[0,1]\to\mathbb{R}$ is described which satisfies the following weaker assumption (b'), but which is not monotonic on $[0,1]$.

(b') For each point $x_0\in[0,1]$, the set $E=\left\{x:\frac{f(x)-f(x_0)}{x-x_0}\geq0\right\}$ does not have zero density at $x_0$.

In this section we identify a problem with this example and provide a fix.  
In the last section we show that the typical continuous function satisfies condition~(b')
but is not monotonically increasing.

\subsection{The function and the problem.}

Begin by choosing the eight points, $p_i=i/7$ for $i=0,1,\dots,7$ in $[0,1]$ (the points $\{p_i\}$ have been explicitely chosen for the sake of concreteness, but with any other choice of these points the same problem would occur) and defining a function $g$ at each of those points as follows:

\begin{align}\label{e0}
g(p_0)&=1,\hspace{5pt}g(p_1)=\frac{4}{3},\hspace{5pt}g(p_2)=\frac{1}{3},\hspace{5pt}g(p_3)=\frac{4}{3}, \notag \\
g(p_4)&=\frac{-1}{3},\hspace{5pt}g(p_5)=\frac{2}{3},\hspace{5pt}g(p_6)=\frac{-1}{3},\hspace{5pt}g(p_7)=0.
\end{align}

Extend $g$ linearly on the intervening intervals and let
\begin{equation}\label{den1}
E_{x_0}=\left\{x\in[0,1]:\dfrac{g(x)-g(x_0)}{x-x_0}\geq0\right\}.
\end{equation}
Then $\phi(x)=\Delta(E_{x},[0,1])$
is continuous and, by inspection, positive at each point $x\in[0,1]$. Hence, there 
an $\alpha>0$ such that $\phi(x)>\alpha$ for each $x\in[0,1]$.

A sequence of functions, $g_n$ is defined inductively by first setting $g_0=g$. 

\medskip
\begin{quote}\label{insert}
Assuming $g_n$ has been defined, $g_{n+1}$ is obtained by replacing each decreasing segment of $g_n$ 
with a suitable affine copy of $g$. \hspace*{23mm} ($\star$)
\end{quote}
We refer to the process described in $(\star)$ as the {\em insertion of $g$ into $g_n$.} Specifically, this entails that if $[a,b]$ denotes a maximal interval on which $g_n$ is decreasing, define 
$g_{n+1}(x)=S\circ g\circ T(x)$ for each $x\in [a,b]$ where

\begin{equation}\label{e1}
T(x)=\frac{x-a}{b-a}\text{ and }S(x)=xg(a)+(1-x)g(b).
\end{equation}

The claim is that the sequence $g_n$ converges pointwise to a continuous function, and that this limit function $f$ satisfies the condition (b'). 

Unfortunately, $\{g_n\}$ does not converge to a continuous function. To see this, we will show that there is a sequence of points $\{x_n\}\in[0,1]$ such that for each $n$ the sequence $\{g_k(x_n)\}$ is eventually constant, and these constants approach $\infty$ as $n\to\infty$.  By the compactness of $[0,1]$ this implies that the functions $g_n$ do not converge to a continuous function.

To that end, we define a nested sequence $I_1\supset I_2\supset I_3\cdots$ of sub-intervals of $[0,1]$ inductively as follows.  We set $I_1=[3/7,4/7]$.  Suppose that $I_k$ has been defined for all $k<n$, and $I_{n-1}=[a_{n-1},b_{n-1}]$.  Then we define
$$I_n=\left[a_{n-1}+\frac{3}{7^n},a_{n-1}+\frac{4}{7^n}\right].$$

The construction of $\{g_n\}$ above immediately implies that for each $n$, $I_n$ is a maximal interval of decrease for $g_n$.  Let us also define a sequence $\{\Delta y_n\}$ by $\Delta y_n=g_n(b_n)-g_n(a_n)$, the net change in $g_n$ on $I_n$.  Since $g(1)-g(0)=1$, and when $g_{n}$ is defined on $I_{n-1}$, the vertical scaling factor used is $|\Delta y_{n-1}|$, an easy induction argument shows that

$$\Delta y_n=\Delta y_{0}\cdot|\Delta y_{n-1}|=-\left|\Delta y_0\right|^{n+1}=-\left(\frac{5}{3}\right)^{n+1}.
$$

Using the same reasoning, it may also be shown that

$$g_n(a_n)=1+\displaystyle\sum_{i=0}^n\frac{1}{3}\cdot\left(\frac{5}{3}\right)^i.$$

Of course this sequence $\{g_n(a_n)\}\to\infty$, and we note that since $I_n$ is a maximal interval of decrease of $g_n$, for any $m>n$, $g_m(a_n)=g_n(a_n)$.  Therefore putting $x_n=a_n$, the sequence has the properties described above.  We conclude therefore that the sequence $g_n$ does not converge to a continuous function.

In actuality and with a little more computation it's not hard to see that $\{g_n(\frac{1}{2})\}\to +\infty$.  The purpose of this note, however is simply to point out that the example needs repair and in the following subsection we show how this can be done in a rather straightforward manner.

\section{A fix}

In this section we adapt Ornstein's function so that the change in $y$ values (as in the $\Delta y_n$ from our discussion of Ornstein's functions) on each decreasing interval is strictly less than $1$.  We will use $h$'s here, rather than $g$'s to avoid confusion.

Define $h$ to be the continuous function defined $[0,13]$ with the following prescribed values at the integers, and linear in the intervening intervals.

\begin{center}
\begin{tabular}{ccccccccc}
$h(0)$&=&4/4&$h(1)$&=&6/4&$h(2)$&=&3/4\\
$h(3)$&=&5/4&$h(4)$&=&2/4&$h(5)$&=&4/4\\
$h(6)$&=&1/4&$h(7)$&=&3/4&$h(8)$&=&0/4\\
$h(9)$&=&2/4&$h(10)$&=&-1/4&$h(11)$&=&1/4\\
$h(12)$&=&-2/4&$h(13)$&=&0/4&
\end{tabular}
\end{center}

The function $h$ has been chosen so that at each point $x_0$, $h_0$ takes a smaller value to the left of $x_0$ or a larger value to the right of $x_0$, thus again assuring that the difference quotient is positive on a density (in $[0,13]$) $\alpha$ set (for some $\alpha>0$ independent of $x_0$).  If we choose $\alpha>0$ a bit smaller, we can say a bit more, that for each $x_0\in[0,13]$, the set

\[
E_{x_0}=\left\{x\in[0,13]:\dfrac{h_0(x)-h_0(x_0)}{x-x_0}\geq0\right\},
\]

where we disregard all $x$-values on which $h_0$ is decreasing, has density $\alpha$ in $[0,13]$.  This will be of use to us when we show that our final function $h_\infty$ satisfies the property (b').

Let $h_n$ be the sequence of functions with domain $[0,13]$ defined recursively in 
precisely the same manner as Ornstein's function was defined, but using $h=h_0$ as 
our ``seed function'', rather than Ornstein's $g$.  
That is, 
\begin{quote}
$\dots$ to get $h_{n+1}$ we simply replace each line segment of the graph of $g_n$
having negative slope with an affine copy of $h$. 
\end{quote}
This process 
We will now show that our sequence $h_n$ does converge uniformly on $[0,13]$.  If $h_0$ is decreasing on an interval $[a,b]\subset[0,13]$, then

\[
h_0(b)-h_0(a)\geq-1\cdot\dfrac{3}{4}.
\]

Recursively, if $h_n$ is decreasing on an interval $[a,b]\subset[0,13]$, then

\[
h_n(b)-h_n(a)\geq-1\cdot\left(\dfrac{3}{4}\right)^n.
\]

Therefore the difference between $h_{n+1}$ and $h_n$ on $[a,b]$ is at most $(3/4)^n$ times the difference between $h_0$ and the line $y=-1/13x+1$ on the interval $[0,13]$.  That is, the sequence $\{h_n\}$ converges uniformly.

\subsection{Show that $h_\infty$ satisfies (b').}

If $x_0\in[0,13]$ is contained in one of the intervals on which some one of the $h_n$'s is increasing, then the desired result holds immediately, because the function values of all later $h_{n+k}$'s will not change on that interval.

Suppose that $x_0$ is not in any such interval.  Define $I_0=[0,13]$, and for each $n>0$, let $I_n\subset[0,13]$ denote the interval on which $h_{n-1}$ is decreasing which contains $x_0$.  (That is, $I_n$ is the interval containing $x_0$ on which $h_{n-1}$ is changed to form $h_n$.)  It is easy to see that $m(I_n)=\left(\dfrac{1}{13}\right)^{n-1}$.

Moreover, if we set $$E_n=\left\{x\in I_n:\dfrac{h_\infty(x)-h_\infty(x_0)}{x-x_0}\geq0\right\},$$ (again including only the $x$ values at which $h_n$ is increasing) then we will show that $\Delta(E_n,I_n)\geq1/26$.  Since the intervals $I_n\to\{x_0\}$, this will immediately imply that (b') holds.

We first show that, if $I_1=[1,2]$, then $$\Delta(E_0,I_0)=\Delta(E_0,[0,13])\geq1/26.$$  The idea is that, regardless of the value of $h_\infty(x_0)$, the union $[0,.5]\cup[2.5,3]$ (ie the left half of the interval of increase to the left of $I_1$ and the right half of the interval of  increase to the right of $I_1$) contains at least mass $1/2$ of $E_0$.  Put $y_0=h_\infty(x_0)$.  We proceed by cases.

\begin{case}
$y_0\geq1.25$.
\end{case}

By inspection, $E_0$ contains the interval $[0,0.5]$.  Thus $\Delta(E_0,[0,13])\geq1/26$.

\begin{case}
$y_0\leq1$.
\end{case}

By inspection, $E_0$ contains the interval $[2.5,3]$.  Thus $\Delta(E_0,[0,13])\geq1/26$.

\begin{case}
$1\leq y_0\leq1.25$.
\end{case}

This is the interesting case.  Consider the intervals of the graph with $x$-values $[0,.5]$ and $[2.5,3]$.  $h_\infty$ has constant slope on these intervals, and $$h_\infty(0)=h_\infty(2.5)=1\text{, and }h_\infty(.5)=h_\infty(3)=1.25.$$  Therefore if we choose $\epsilon\in(0,1/2)$ so that $h_\infty(\epsilon)=y_0$, then $E_0$ contains the union $[0,\epsilon]\cap[2.5+\epsilon,3]$.

But $m([0,\epsilon]\cap[2.5+\epsilon,3])=.5$, so we conclude that $\Delta(E_0,[0,13])\geq1/26$.

This argument extends immediately to show that $\Delta(E_0,[0,13])\geq1/26$ for every other choice of $I_1$, always finding points in $E_0$ with mass at least $1/2$ in the intervals of increase of $h_0$ directly to the left and right of $I_1$.

Let us now consider the second interval $I_1$.  We wish to show that $\Delta(E_1,I_1)\geq1/26$.  The first (ie from left to right) possibility for $I_2$ is the interval $[14/13,15/13]$.

Here the argument is the same with the $y$ values for the cases being: Case 1) $y_0\geq27/16$, Case 2) $y_0\leq1.5$, and Case 3) $1.5\leq y_0\leq27/16$, and the intervals where we will be finding points in $E_1$ are $[26/26,27/26]$ (the first half of the interval of increase of $h_1$ to the left of $I_2$) and $[31/26,32/26]$ (the second half of the interval of increase of $h_1$ to the right of $I_2$).  The conclusion is that $m(E_1)$ is greater than or equal to the length of one of these intervals, $m(E_1)\geq1/26$, and $m(I_1)=1$.  Thus $\Delta(E_1,I_1)\geq1/26$.  This iterates nicely (we always pick up a factor of $1/13$ in both the numerator and the denominator of our density calculation, which cancel), thus $\Delta(E_n,I_n)\geq1/26$, and thus the upper density of the set

\[
E=\left\{x\in[0,13]:\dfrac{h_\infty(x)-h_\infty(x_0)}{x-x_0}\geq0\right\}
\]

at $x_0$ is greater than or equal to $1/26>0$, proving that $h_\infty$ satisfies (b').

\section{Counterexamples are Typical}

The goal of this final section is to show that continuous functions with Property (b') 
are ubiquitous in the complete  space $\c$ of all continuous functions on [0,1] endowed with the $\sup$ metric. However, ``ubiquitous'' can be defined in several ways. 

A property is {\em typical} in a complete metric space of functions, $\c$ if the set of functions  
enjoying that property is residual (the complement of a set of the first Baire 
Category) in $\c$.  A well known method of establishing 
whether a given set $A$ is residual or not is the so-called Banach-Mazur Game which we describe 
briefly here. See \cite{Z} for more details and generalizations.

This is a two player game and the players take turns selecting open balls from $\c$. 
Suppose $A\subset\c$ is fixed. Player $P_1$ selects a ball, $B_1$, then player two, $P_2$ selects a ball 
$B_2\subset B_1$ and so on so that the game produces a nested sequence of balls, 
$\{B_n:n\in\N\}$. $P_2$ wins the game if $A\cap\bigcap_{n=1}^\infty B_n \not=\emptyset$ 
otherwise $P_1$ wins. And $P_2$ has a winning strategy provided $P_2$ can always win the game, meaning independently of the balls $P_1$ selects.
\begin{theorembm}
$P_2$ has a winning strategy iff $A$ is residual.
\end{theorembm}

The Banach-Mazur Game is a convenient way to see why the next result is true. The proof uses the
following  notation. If $p_i=(x_i,y_i)\in\R^2,\ i=1,2$ we define
\[
DQ(p_1,p_2)=\frac{y_2-y_1}{x_2-x_1}.
\]
\begin{theorem}
The typical continuous function has Property (b').
\end{theorem}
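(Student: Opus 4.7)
The plan is to apply the Banach--Mazur Theorem to the set $A\subset\c$ of continuous functions with Property~(b'), by exhibiting a winning strategy for $P_2$. The idea is for $P_2$ to force the function in $\bigcap_n B_n$ to contain rescaled copies of the seed function $h$ from the preceding section at arbitrarily fine scales, so that the case analysis of that section yields Property~(b') at every point.

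When $P_1$ plays $B_n=B(g_n,\delta_n)$, $P_2$ sets $s_n:=\delta_n/10$, chooses $M_n\in\nn$ so large that the oscillation of $g_n$ on every interval of length $\ell_n:=1/M_n$ is less than $s_n/100$, and partitions $[0,1]$ into $M_n$ equal sub-intervals $J_{n,k}$ (with a small stage-dependent shift, so that any preassigned $x_0\in[0,1]$ lies in the interior of some $J_{n,k}$ for infinitely many $n$). On each $J_{n,k}$ let
\[
f_n(x)=L_{n,k}(x)+s_n\,\tilde h_{n,k}(x),
\]
where $L_{n,k}$ is the linear interpolant of $g_n$ at the endpoints of $J_{n,k}$ and $\tilde h_{n,k}$ is the affine rescaling of $h$ with domain $J_{n,k}$ and range $[0,1]$. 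Then $\|f_n-g_n\|_\infty<\delta_n$, so $P_2$ responds with $B_{n+1}=B(f_n,\ee_n)$, where $\ee_n<\min\{\delta_n/2,\,s_n/1000,\,1/n\}$.

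Now take any $f\in\bigcap_n B_n$ and fix $x_0\in[0,1]$; write $y_0:=f(x_0)$ and, for infinitely many $n$, let $J_n$ be the sub-interval of the $n$th partition with $x_0$ in its interior. On $J_n$, $f_n$ is a rescaled copy of $h$ plus a linear background of oscillation at most $s_n/100$. Since $|y_0-f_n(x_0)|<\ee_n\ll s_n$, the three-case argument of the fix applied with $y_0$ (in place of $f_n(x_0)$) identifies a sub-interval $K_n\subset J_n$ of length at least $c_1\ell_n$ (for an absolute constant $c_1>0$) on which $DQ((x_0,y_0),(x,f_n(x)))$ is non-negative and in fact bounded below by an absolute constant times $s_n/\ell_n$. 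Because $\ee_n\ll s_n$, this sign survives replacing $f_n$ by $f$, and so $\lambda(E_{x_0}(f)\cap J_n)\geq c_1\ell_n$. Since $J_n\subset(x_0-\ell_n,x_0+\ell_n)$ and $\ell_n\to 0$ along this subsequence,
\[
\overline\Delta(E_{x_0}(f),x_0)\geq c_1/2>0,
\]
establishing Property~(b') at $x_0$. By the Banach--Mazur Theorem, $A$ is residual.

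The main obstacle is the margin verification: one must show that on the Ornstein good set $K_n$ the difference quotient for $f_n$ is bounded below by a positive multiple of $s_n/\ell_n$, not merely non-negative, so that the $\ee_n$-perturbation cannot flip its sign. This follows from the fact that $h$ is piecewise linear with thirteen segments, each of slope of order $1$ on its normalized domain, so that after amplitude rescaling by $s_n$ on an interval of length $\ell_n$ the relevant slopes become of order $s_n/\ell_n$. The allotted $s_n/100$ oscillation of $L_{n,k}$ and the perturbation bound $\ee_n\ll s_n$ are then negligible by comparison, and the density survives.
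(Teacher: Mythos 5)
Your proposal follows the paper's approach closely: play the Banach--Mazur game, having $P_2$ insert affine copies of the seed function $h$ into a fine partition of $[0,1]$ and then shrink the response radius so that the positive-density positive difference quotients cannot be destroyed by later perturbations. Where the paper settles the existence of a good response radius $\delta$ via two compactness arguments, you make the same quantities explicit ($s_n=\delta_n/10$, oscillation control $s_n/100$, perturbation budget $\ee_n<s_n/1000$) and then verify the slope margin directly; this is a cleaner presentation of the same idea, and your final step, observing $J_n\subset(x_0-\ell_n,x_0+\ell_n)$ and concluding $\overline\Delta(E_{x_0}(f),x_0)\ge c_1/2$, fills in an inference the paper leaves implicit.

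There is, however, a technical flaw in your construction of $P_2$'s response: with $f_n=L_{n,k}+s_n\,\tilde h_{n,k}$ on $J_{n,k}$, where $L_{n,k}$ matches $g_n$ at the partition endpoints and $\tilde h_{n,k}$ is an affine rescaling of $h$, the function $f_n$ is \emph{not continuous at the interior partition points}. The seed $h$ from the paper's fix has $h(0)=1\ne 0=h(13)$, so any affine rescaling $\tilde h$ to domain and range $[0,1]$ has $\tilde h(0)\ne\tilde h(1)$; adding $s_n\tilde h_{n,k}$ on each $J_{n,k}$ therefore produces jumps of size $s_n\,|\tilde h(0)-\tilde h(1)|$ at every junction, so $f_n\notin\c$ and $B(f_n,\ee_n)$ is not a legal play. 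The paper avoids this because its insertion $S\circ h\circ T$ (see $(\star)$ and equation (5.1)) is anchored to the endpoint values and so is automatically continuous. Your argument can be repaired with a one-line change: replace $\tilde h$ by $\hat h(t)=\tilde h(t)-\bigl[(1-t)\tilde h(0)+t\,\tilde h(1)\bigr]$, which vanishes at both endpoints. Subtracting this linear trend only \emph{adds} a fixed positive constant to the difference quotients of $\tilde h$ (since $\tilde h(1)<\tilde h(0)$), so the good sets $K_n$ and the $s_n/\ell_n$ slope margin survive, and the rest of your estimate goes through unchanged. With that correction the proof is sound.
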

\begin{proof}

Suppose $P_1$  has chosen the ball $B_n\equiv B_\epsilon(f)\subset\c$ at the $n^{th}$ stage of play.  
We describe the strategy for $P_2$. 
\begin{enumerate}
\item First partition $[0,1]$ into sufficiently small intervals such that the insertion (see
$(\star)$ on page \pageref{insert}) of $h$  into any partition interval lies within the $\epsilon/2$ ball 
about $f$. Let $g:[0,1]\to\R$ be the conjunction of all such insertions and define
\[
E_{x_0}=\left\{x: \frac{g(x)-g(x_0)}{x-x_0}>0\right\}.
\]
Then for every partition interval $J$ and every $x_0\in J$, 
$\Delta(E_{x_0},J)>\alpha$. This function $g$ is the center of the ball $P_2$ will respond with.
\item To determine the response radius, first fix a partition interval $J$ and an $x_0\in J$. There exists  
$\eta(x_0)>0$ such that $\Delta(E_{x_0}\backslash B_{\eta(x_0)},J)>\alpha$. Hence by compactness 
there is a $\eta>0$ such that for every $x\in [0,1]$ and every partition interval $J$ containing $x$,
\[
\Delta(E_{x}\backslash B_\eta(x),J)>\alpha.
\]
Hence, again by compactness, there is a $0<\delta<\epsilon/2$ such that $\delta<\eta$ and if $x_0\in[0,1]$ and
$x_1\in E_{x_0}\backslash B_{\eta}$ then whenever 
\begin{equation}\label{nest}
p_0\in B_\delta((x_0,g(x_0))\text{ and }
p_1\in B_\delta((x_1,g(x_1))\text{ then }DQ(p_0,p_1)>0.
\end{equation}
\end{enumerate}

\smallskip
\noindent Player $P_2$ returns the ball $B_\delta(g)$
where $\delta$ is the radius just determined above.

\medskip
Now, any sequence of plays converges uniformly in the sense that if $f_n$ is any choice of a function in $B_n$, then the sequence of functions $\{f_n\}$ converges uniformly. Due to 
(\ref{nest}), at each $x\in[0,1]$ the density of the set of points for which the difference quotient is 
positive at $x$ exceeds $\alpha$ at the scale of each play of $P_2$. That is, the limit function 
satisfies (b'); this then, completes the proof.
\end{proof}

As it is well known that the set of monotone functions is nowhere dense in $\c$, the following  is an immediate corollary.

\begin{cor}
The typical continuous function satisfies property~(b') but is not monotonically increasing on any interval.
\end{cor}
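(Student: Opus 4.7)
My plan is to invoke the Banach--Mazur Theorem by exhibiting a winning strategy for $P_2$ such that the unique function lying in the intersection of the played balls always satisfies (b'). On each round, $P_2$ will move to a function built from $P_1$'s center by the $h$-insertion process $(\star)$ of Section~4 performed on a fine partition, and will pick a response radius small enough to protect the resulting density-$\alpha$ set of points with positive difference quotient from every later move.

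At round $n$, suppose $P_1$ has played $B_\epsilon(f)\subset\c$. Player $P_2$ first chooses a partition of $[0,1]$ so fine that inserting an affine copy of the fixed seed $h$ from Section~4 into each partition interval produces a continuous function $g$ with $\|g-f\|_\infty<\epsilon/2$. The design of $h$ yields a universal constant $\alpha>0$, independent of the partition, such that for every partition interval $J$ and every $x_0\in J$,
\[
\Delta(E_{x_0},J)>\alpha,\qquad E_{x_0}=\left\{x\in J:\frac{g(x)-g(x_0)}{x-x_0}>0\right\}.
\]

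The main obstacle is to choose the response radius $\delta$ so that this density-$\alpha$ bound is preserved under every admissible future perturbation of $g$. The key observation is that for each $x_0$ a positive fraction of $E_{x_0}$ consists of witness points $x_1$ bounded away from $x_0$. A first compactness argument therefore produces a single $\eta>0$ with $\Delta(E_{x_0}\setminus B_\eta(x_0),J)>\alpha$ uniformly in $x_0$ and $J$; a second, using uniform continuity of $g$, produces $\delta\in(0,\min(\eta,\epsilon/2))$ small enough that whenever $x_1\in E_{x_0}\setminus B_\eta(x_0)$ and the points $p_0,p_1$ lie within $\delta$ of $(x_0,g(x_0))$ and $(x_1,g(x_1))$ respectively, the slope $DQ(p_0,p_1)$ is still positive. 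Player $P_2$ responds with $B_\delta(g)$.

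Finally, the radii can be arranged to shrink geometrically, so any sequence of moves forces uniform convergence of the centers to a single $f^\ast\in\c$, and the partition scales used by $P_2$ can be chosen to shrink to $0$ with $n$. For any $x_0\in[0,1]$ and any $n$, the set of positive-slope witnesses for $f^\ast$ inside the round-$n$ partition interval containing $x_0$ still has density at least $\alpha$: each witness produced by $P_2$ at round $n$ was selected using the separation $\eta$ and the radius $\delta$ of that round, and hence remains a witness for every $\tilde g\in B_\delta(g)$, in particular for $f^\ast$. Since the partition intervals shrink to $\{x_0\}$, the upper density at $x_0$ of the set appearing in (b') is at least $\alpha>0$. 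By the Banach--Mazur Theorem, the set of functions in $\c$ satisfying (b') is residual.
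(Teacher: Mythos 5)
Your argument establishes only the first half of the Corollary. What you prove, via the Banach--Mazur strategy built around $h$-insertion, is that the set of functions satisfying (b$'$) is residual in $\c$ --- but that is precisely the preceding Theorem, which the paper has already established by essentially the same game. The Corollary asserts something more: that the typical continuous function satisfies (b$'$) \emph{and} is not monotonically increasing on any interval. Your proposal never addresses the second conjunct; nothing in your strategy or your final paragraph rules out monotonicity of the limit function $f^\ast$ on some subinterval.

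The paper's proof of the Corollary is genuinely short: it takes the Theorem as given and simply observes that the set of continuous functions that are monotone on some interval is first category in $\c$ (take a countable union, over intervals with rational endpoints, of the nowhere dense sets of functions monotone on a fixed such interval). Since the intersection of two residual sets is residual, the set of functions satisfying (b$'$) and nowhere monotone is residual. To repair your proposal, you either need to append this standard category argument about nowhere-monotone functions, or you need to modify your $P_2$-strategy and final verification so that the limit $f^\ast$ is visibly forced to oscillate on every subinterval of $[0,1]$ --- the $h$-insertions do create oscillation at each scale, but you have not argued that this oscillation survives to the limit, and that claim is not automatic from the density estimate you track.
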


\end{document}